\newtheorem{theorem}{Theorem}
\newtheorem{corollary}[theorem]{Corollary}
\newtheorem{athm}{Theorem}
\theoremstyle{definition}
\newtheorem{question}[theorem]{Question}
\theoremstyle{remark}
\DeclareMathOperator\Aut{Aut}
\DeclareMathOperator\Cent{C}
\DeclareMathOperator\centre{Z}
\DeclareMathOperator\GF{GF}
\DeclareMathOperator\GL{GL}
\DeclareMathOperator\Hol{Hol}
\DeclareMathOperator\Inn{Inn}
\DeclareMathOperator\PSL{PSL}
\DeclareMathOperator\SL{SL}
\DeclareMathOperator\Ker{Ker}
\title{Constructing skew left braces whose additive group has trivial centre}
\author{A. Ballester-Bolinches\thanks{Departament de Matem\`atiques, Universitat de Val\`encia, Dr.\ Moliner, 50, 46100 Burjassot, Val\`encia, Spain; \texttt{Adolfo.Ballester@uv.es}, \texttt{Ramon.Esteban@uv.es}, \texttt{Vicent.Perez-Calabuig@uv.es}; ORCID 0000-0002-2051-9075, 0000-0002-2321-8139, 0000-0003-4101-8656}\and R. Esteban-Romero\addtocounter{footnote}{-1}\footnotemark\and P. Jim{\'e}nez-Seral\thanks{Departamento de Matem\'aticas, Universidad de Zaragoza, Pedro Cerbuna, 12, 50009 Zaragoza, Spain; \texttt{paz@unizar.es}; ORCID 0000-0003-4809-1784}\and V. P\'erez-Calabuig\addtocounter{footnote}{-2}\footnotemark}
\date{}
\begin{document}
\maketitle
\begin{abstract}
A complete description of all possible multiplicative groups of finite skew left braces whose additive group has trivial centre is shown. As a consequence, some earlier results of Tsang can be improved and an answer to an open question set by Tsang at Ischia Group Theory 2024 Conference is provided. 
  
  \emph{Keywords: skew left brace, trifactorised group, trivial centre}

  \emph{Mathematics Subject Classification (2020):
    16T25, 
    81R50, 
    20C35, 
    20C99, 
    20D40. 
  }
\end{abstract}
\section{Introduction}

An important algebraic structure that plays a 
key role in the combinatorial theory of Yang-Baxter equation is the skew brace structure. Skew left braces, introduced in~\cite{GuarnieriVendramin17},  can be regarded as extensions of Jacobson radical rings and show connections with several areas of mathematics such as triply factorised groups and Hopf-Galois structures (see~\cite{BallesterEsteban22, CarantiStefanello21, Childs18})

Skew left braces classify solutions of the Yang-Baxter equation (see~\cite{EtingofSchedlerSoloviev99, GuarnieriVendramin17}). This justifies the search for constructions of skew braces and classification results. 

Recall that a \emph{skew left brace} is a set endowed with two group structures $(B,+)$ and $(B,\cdot)$ which  are linked by the distributive-like law ${a(b+c)}=ab-a+ac$ for $a$, $b$, $c\in B$. 


In the sequel, the word \emph{brace} refers to a skew left brace.  

Given a brace $B$, there is an action of the multiplicative group on the additive group by means of the so-called \emph{lambda map}:
\[ \lambda \colon a \in (B,\cdot) \longmapsto \lambda_a \in \Aut(B,+)), \quad \lambda_a(b)=-a+ab, \ \text{for all $a,b\in B$.}\]
Braces can be described in terms of  regular subgroups of the holomorph of the additive group. Recall that the holomorph of a group $G$ is the semidirect product $\Hol(G) = [G]\Aut(G)$. Let $B$ be a brace and set $K = (B,+)$. Then $H=\{ (a,\lambda_a) \mid a\in B\}$ is a regular subgroup of the holomorph $\Hol(K)$ isomorphic to $(B, {\cdot})$ (see~\cite[Theorem 4.2]{GuarnieriVendramin17}). If we consider the subgroup $S = KH \leq \Hol(K)$, 
then
\[S=KH=KE=HE,\]
where $E = \{(0,\lambda_b)\mid b \in B\}$ and $\Cent_{E}(K)= K\cap E =H\cap E =1$. We call $\mathsf{S}(B)=(S, K, H, E)$ the \emph{small trifactorised group} associated with~$B$.



In \cite{Tsang23-blms}, Tsang showed it is possible to construct finite braces by just looking at the automorphism group of the additive group instead of looking at the whole holomorph. This is a significant improvement both from an algebraic and computational approach. 

\begin{theorem}[{see \cite[Corollary~2.2]{Tsang23-blms}}]\label{th-Tsang-2.2}
  If the finite group $G$ is the multiplicative group of a brace with additive group~$K$, then there exist two subgroups $X$ and $Y$ of $\Aut(K)$ that are quotients of $G$ satisfying
  \[XY=X{\Inn(K)}=Y{\Inn(K)}.\]
\end{theorem}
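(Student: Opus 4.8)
The plan is to transport the three factorisations of the small trifactorised group $\mathsf{S}(B)=(S,K,H,E)$ into $\Aut(K)$ by means of the conjugation action of $S$ on its normal subgroup $K$. Since $K=\{(a,\id)\mid a\in B\}$ is the base group of $\Hol(K)=[K]\Aut(K)$, we have $K\trianglelefteq\Hol(K)$, and as $K\leq S$ also $K\trianglelefteq S$. I would therefore introduce the homomorphism $c\colon S\to\Aut(K)$ sending each $s\in S$ to the automorphism of $K$ induced by conjugation by $s$.

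The first technical step is to compute $c$ explicitly. A direct calculation in the holomorph shows that conjugation of $(a,\id)$ by $(b,\phi)$ yields $(b+\phi(a)-b,\id)$, so that $c\bigl((b,\phi)\bigr)=\iota_b\circ\phi$, where $\iota_b\in\Inn(K)$ denotes the inner automorphism $x\mapsto b+x-b$. Evaluating this on the three distinguished subgroups gives $c(K)=\{\iota_a\mid a\in B\}=\Inn(K)$, then $c(E)=\{\lambda_b\mid b\in B\}=\im(\lambda)$ (since the first coordinate of elements of $E$ is $0$, the maps $c$ and the canonical projection agree on $E$), and finally $c(H)=\{\iota_a\lambda_a\mid a\in B\}$. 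I would then set $Y=c(E)=\im(\lambda)$ and $X=c(H)$, both subgroups of $\Aut(K)$.

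Next I would apply $c$ to the identities $S=KH=KE=HE$. As $c$ is a homomorphism, the image of a product of subgroups is the product of their images, so $c(S)=c(K)c(H)=\Inn(K)\,X$, as well as $c(S)=c(K)c(E)=\Inn(K)\,Y$ and $c(S)=c(H)c(E)=XY$. Because $\Inn(K)\trianglelefteq\Aut(K)$, the products $\Inn(K)X$ and $\Inn(K)Y$ coincide with $X\Inn(K)$ and $Y\Inn(K)$ respectively, and we obtain $XY=X\Inn(K)=Y\Inn(K)$, which is the required relation. To finish, since $H\cong(B,\cdot)\cong G$ and $c|_H$ is a homomorphism, $X=c(H)$ is a quotient of $G$; likewise $Y=\im(\lambda)$ is a quotient of $G$ because $\lambda$ is a homomorphism from $(B,\cdot)$.

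The main point to get right is the explicit formula $c\bigl((b,\phi)\bigr)=\iota_b\circ\phi$ and, in particular, the bookkeeping of left versus right multiplication: the relation $XY=X\Inn(K)=Y\Inn(K)$ only reads off cleanly once one invokes the normality of $\Inn(K)$ in $\Aut(K)$ to reconcile the order of the factors produced by $c$. Everything else is a routine transport of the factorisations through a group homomorphism.
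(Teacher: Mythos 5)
Your proposal is correct and takes essentially the same route as the paper: the paper's proof of Theorem~\ref{th-trivz} introduces exactly this conjugation homomorphism (denoted $\alpha$ there), sets $X=\alpha(H)$ and $Y=\alpha(E)=\pi(H)$, and transports the trifactorisation $S=KH=KE=HE$ through it to obtain $XY=X\Inn(K)=Y\Inn(K)$, with $X$ and $Y$ quotients of $H\cong G$. Your explicit formula $c\bigl((b,\phi)\bigr)=\iota_b\circ\phi$ and the identification $c(E)=\im(\lambda)$ match the paper's computation, so there is nothing to add.
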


She looks for a sort of converse of the above theorem in the case of finite braces with an additive group of trivial centre, and proved the following: 

\begin{theorem}[{see \cite[Proposition~2.7]{Tsang23-blms}}]\label{th-prop-2.7}
  Suppose that the centre of a finite group $(K, {+})$ is trivial and let $P$ be a subgroup of $\Aut(K)$ containing $\Inn(K)$. If $P=XY$ is factorised by two subgroups $X$ and $Y$ such that $X\cap Y=1$, $X{\Inn(K)}=Y{\Inn(K)}=P$ and $X$ splits over $X\cap \Inn(K)$, then there exists a brace $B$ whose additive group is isomorphic to $(K, {+})$ and whose multiplicative group is isomorphic to a semidirect product $[X\cap \Inn(K)]Y$ for a suitable choice of the action $\alpha\colon Y\longrightarrow \Aut(X\cap \Inn(K))$.  
\end{theorem}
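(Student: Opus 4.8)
The plan is to realise the desired brace as a regular subgroup of $\Hol(K)$ and read it off through the Guarnieri--Vendramin correspondence. Since $\centre(K,{+})=1$, the map $\iota\colon K\to\Inn(K)$, $\iota(a)\colon x\mapsto a+x-a$, is an isomorphism; write $N=\Inn(K)$. Inside $\Hol(K)=[K]\Aut(K)$, with $\bar K=\{(a,\id):a\in K\}$ the group of left translations, I would use the conjugation homomorphism $c\colon\Hol(K)\to\Aut(K)$ induced by the action on $\bar K$; explicitly $c\bigl((a,\phi)\bigr)=\iota(a)\phi$. Its kernel is the group of right translations $K'=\{(a,\iota(a)^{-1}):a\in K\}$, and one checks that $c(\bar K)=N$ with $c|_{\bar K}$ an isomorphism (this is where $\centre(K)=1$ enters) and that $c$ is onto $\Aut(K)$.

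With this in hand I would put $E=\{(0,\phi):\phi\in Y\}\cong Y$ and $S=[\bar K]E$, and then \emph{define} the candidate multiplicative group as the full preimage $H=(c|_S)^{-1}(X)=\{(b,\phi):\phi\in Y,\ \iota(b)\phi\in X\}$. The first block of checks is that $(S,\bar K,H,E)$ is a small trifactorised group: the equalities $S=\bar KH=\bar KE=HE$ follow formally from $P=XN=YN=XY$ together with $\ker(c|_S)\le H$, while $\Cent_E(\bar K)=1$ because $\Aut(K)$ acts faithfully on $\bar K$, and $\bar K\cap E=1$, $H\cap E=1$ (the last from $X\cap Y=1$). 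The essential point is that $H$ is \emph{regular}: the stabiliser in $H$ of $0\in K$ consists of the $(0,\phi)\in H$, and for such an element $\phi\in Y$ and $c\bigl((0,\phi)\bigr)=\phi\in X$, so $\phi\in X\cap Y=1$; thus $H$ acts freely, and a counting argument using $X\cap Y=1$ and $P=XN=YN$ gives $|H|=|A|\,|Y|=|K|$, whence freeness upgrades to regularity. Here $A=X\cap N$.

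It then remains to identify the isomorphism type of $H$. One computes $H\cap\bar K=\{(a,\id):\iota(a)\in A\}=:\bar A$, a copy of $A$ that is normal in $H$ (being $H\cap\bar K$ with $\bar K\trianglelefteq S$), and the projection $q\colon H\to Y$, $(b,\phi)\mapsto\phi$, is onto with kernel $\bar A$, so $H$ is an extension of $Y$ by $A$ and the statement reduces to splitting it. This is precisely where the hypothesis that $X$ splits over $A$ is spent: from $X=A\rtimes T$ one deduces $P=N\rtimes T$ (indeed $TN=XN=P$ and $N\cap T\le A\cap T=1$), so every $\phi\in P$ has a unique factorisation $\phi=n_\phi t_\phi$ with $n_\phi\in N$, $t_\phi\in T$.

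The main obstacle, and the heart of the argument, is producing a complement to $\bar A$ in $H$. I would define $s\colon Y\to K$ by $s(\phi)=\iota^{-1}(n_\phi^{-1})$, so that $c\bigl((s(\phi),\phi)\bigr)=n_\phi^{-1}\phi=t_\phi\in T\subseteq X$ and hence $(s(\phi),\phi)\in H$, and then show that $\phi\mapsto(s(\phi),\phi)$ is a homomorphism, i.e.\ that $s$ satisfies the cocycle identity $s(\phi\psi)=s(\phi)+\phi(s(\psi))$. Applying $\iota$ and using $\iota(\phi(k))=\phi\,\iota(k)\,\phi^{-1}$ together with $t_\phi n_\psi t_\phi^{-1}\in N$, both sides reduce to $(t_\phi n_\psi^{-1}t_\phi^{-1})n_\phi^{-1}$, so the identity holds. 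The image $C=\{(s(\phi),\phi):\phi\in Y\}$ is then a subgroup isomorphic to $Y$ meeting $\bar A$ trivially, whence $H=\bar A\rtimes C$, with $\alpha\colon Y\to\Aut(A)$ the induced conjugation action. Finally, regularity of $H$ yields, by the Guarnieri--Vendramin correspondence, a brace $B$ with $(B,{+})\cong(K,{+})$ and $(B,{\cdot})\cong H\cong[X\cap\Inn(K)]Y$, as required.
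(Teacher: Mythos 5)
Your proposal is correct, and it is worth spelling out how it sits relative to the paper, because the paper never actually proves this statement: it is quoted from Tsang, and the closest the paper comes is the converse half of Theorem~\ref{th-trivz} together with Corollary~\ref{cor:drop-split}, which under your hypotheses only identify the multiplicative group as a subdirect product of $X$ and $Y$, not as a semidirect product $[X\cap\Inn(K)]Y$. Your regular subgroup is in fact the same object as the paper's: since $\iota(b)\phi\in X$ holds if and only if $\zeta(b)=x\phi^{-1}$ for some $x\in X$, your $H=(c|_S)^{-1}(X)$ coincides with the paper's $H=\{(b,y)\mid (x,y)\in W,\ \zeta(b)=xy^{-1}\}$ from the converse of Theorem~\ref{th-trivz}, specialised via the $\gamma$ of Corollary~\ref{cor:drop-split}. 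The differences are in the verifications and in what you get out. For regularity you use the stabiliser-of-$0$ computation plus the count $\lvert H\rvert=\lvert X\cap\Inn(K)\rvert\,\lvert Y\rvert=\lvert K\rvert$, whereas the paper shows $(x,y)\mapsto xy^{-1}$ is a bijection from $W$ onto $\Inn(K)$ and then cites \cite[Proposition~2.5]{BallesterEstebanPerezC24-braces64-55}; both are fine. More substantially, you supply the part of the statement that the paper's machinery does not address at all: the decomposition $H\cong[X\cap\Inn(K)]Y$. That argument is sound --- writing $A=X\cap\Inn(K)$ and $X=A\rtimes T$, you correctly deduce $P=\Inn(K)\rtimes T$, so the factorisation $\phi=n_\phi t_\phi$ is unique; normality of $\Inn(K)$ gives $n_{\phi\psi}=n_\phi(t_\phi n_\psi t_\phi^{-1})$, which is exactly the cocycle identity needed for $s(\phi)=\iota^{-1}(n_\phi^{-1})$, and $C=\{(s(\phi),\phi)\mid\phi\in Y\}$ is then a complement to $H\cap\bar K\cong A$ in $H$. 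In short, your route recovers the full strength of Tsang's statement (explicit complement and all), while the paper's route deliberately trades that sharper structural conclusion for generality, namely dropping the splitting hypothesis, which is precisely the point of Corollary~\ref{cor:drop-split}.
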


The above two theorems are the key to prove the main results of \cite{Tsang23-blms,Tsang24-agta}.

In \cite{Tsang-Ischia24}, Tsang posed the following question:

\begin{question}\label{quest-Tsang-Ischia}
Is it possible to extend Theorem~\ref{th-prop-2.7} by dropping the assumption that $X$ splits over $X\cap\Inn(K)$?
\end{question}

The aim of this paper is to give a complete characterisation of the multiplicative groups of a brace with additive group of trivial centre. As a consequence, we present an improved version of Theorem~\ref{th-prop-2.7} (on which the main result of~\cite{Tsang23-blms} heavily depends), and we give an affirmative answer to Question~\ref{quest-Tsang-Ischia}.

\begin{athm}\label{th-trivz}
  Let $K$ be a finite group with trivial centre. For every brace $B$ with additive group $K = (B,+)$ and multiplicative group $C = (B,\cdot)$, there exist subgroups $X$ and $Y$ of $Aut(K)$ satisfying the following properties: 
\begin{enumerate}[(a)]
    \item $XY=X{\Inn(K)}=Y{\Inn(K)}$,
    \item $\Inn(K)$ has two subgroups $N$ and $M$ such that $N\trianglelefteq X$ and $M\trianglelefteq Y$ and there exists an isomorphism $\gamma\colon Y/M\longrightarrow X/N$ such that whenever $\gamma(yM)=xN$, it follows that $xy^{-1}\in \Inn(K)$. Furthermore, if $z\in\Inn(K)$, there exist $x\in X$, $y\in Y$ such that $xN=\gamma(yM)$ and $xy^{-1}=z$,
    \item $\lvert K\rvert=\lvert X\rvert \lvert M\rvert=\lvert Y\rvert \lvert N\rvert$.
\end{enumerate}
In this case, 
\begin{enumerate}[(a)]
\setcounter{enumi}{3}
\item $C$ has two normal subgroups $T$ and $V$ with $T\cap V=1$, $X\cong C/T$ and $Y\cong C/V$.
\end{enumerate}

Conversely, for every pair $X$, $Y$ of subgroups of $\Aut(K)$ satisfying conditions (a)--(c), there exists a brace $B$ with $K = (B,+)$ and $C= (B,\cdot)$ satisfying (d).
\end{athm}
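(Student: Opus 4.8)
The plan is to route everything through the small trifactorised group $\mathsf{S}(B)=(S,K,H,E)$ together with the conjugation homomorphism $c\colon\Hol(K)\to\Aut(K)$, $c(a,\phi)=\iota_a\phi$, where $\iota_a\in\Inn(K)$ is conjugation $b\mapsto a+b-a$. One checks that $c$ is a surjective homomorphism with kernel $\Cent_{\Hol(K)}(K)=\{(a,\iota_{-a}):a\in K\}$, and since $\centre(K)=1$ the assignment $a\mapsto\iota_a$ is an isomorphism $K\cong\Inn(K)$; in particular $\lvert\Inn(K)\rvert=\lvert K\rvert$. On the three factors, $c$ restricts to $c(K)=\Inn(K)$, $c(E)=\im\lambda=:Y$ and $c(H)=\{\mu_a:a\in B\}=:X$, where $\mu_a:=\iota_a\lambda_a$. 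Both $\lambda$ and $\mu$ are homomorphisms from $C=(B,\cdot)$ into $\Aut(K)$ (the latter because $\mu=c$ composed with the isomorphism $C\cong H$).

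For the forward direction, set $P:=c(S)$. Applying $c$ to $S=KH=KE=HE$ gives $P=XY=X\Inn(K)=Y\Inn(K)$, which is (a). Put $T:=\ker\mu$ and $V:=\ker\lambda$; these are normal in $C$ with $C/T\cong X$, $C/V\cong Y$, and $T\cap V=1$ since $\lambda_a=\id$ forces $\mu_a=\iota_a$, which is trivial only for $a\in\centre(K)=1$ — this is (d). The delicate point is the choice of $N$ and $M$: the naive $X\cap\Inn(K)$ and $Y\cap\Inn(K)$ turn out too large, so instead I would take $M:=\lambda(T)$ and $N:=\mu(V)$, which sit in $\Inn(K)$ and are normal in $Y$ and $X$ as images of normal subgroups under the surjections $\lambda,\mu$. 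Because $\lambda|_T$ and $\mu|_V$ are injective (their kernels are $T\cap V=1$), one gets $\lvert M\rvert=\lvert T\rvert$ and $\lvert N\rvert=\lvert V\rvert$, so $\lvert X\rvert\lvert M\rvert=\lvert Y\rvert\lvert N\rvert=\lvert K\rvert$, giving (c). Finally $\gamma(\lambda_aM):=\mu_aN$ is well defined and an isomorphism $Y/M\to X/N$ (both are $\cong C/TV$); for representatives $y=\lambda_am$, $x=\mu_an$ one computes $xy^{-1}=\iota_a\,\lambda_a(nm^{-1})\lambda_a^{-1}\in\Inn(K)$, and taking $x=\mu_w$, $y=\lambda_w$ realises any $z=\iota_w\in\Inn(K)$ as $xy^{-1}$, which is (b).

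For the converse, given $X,Y,N,M,\gamma$ satisfying (a)--(c), I would form the fibre product $D:=\{(y,x)\in Y\times X:\gamma(yM)=xN\}$. Its two projections make $D$ subdirect in $Y\times X$ with $\ker(D\to Y)\cong N$ and $\ker(D\to X)\cong M$, so $\lvert D\rvert=\lvert Y\rvert\lvert N\rvert=\lvert K\rvert$ by (c). The crucial step is that $\beta\colon D\to\Inn(K)$, $\beta(y,x)=xy^{-1}$, is a bijection: it lands in $\Inn(K)$ and is surjective by the two halves of (b), hence injective since $\lvert D\rvert=\lvert\Inn(K)\rvert$. I then set, for each $a\in K$, $d_a=(y_a,x_a):=\beta^{-1}(\iota_a)$, $\lambda_a:=y_a\in Y$ and $a\cdot b:=a+\lambda_a(b)$. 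Writing $x_c=\iota_cy_c$ one obtains in one stroke $\beta(d_ad_b)=x_ax_b(y_ay_b)^{-1}=\iota_a\,\iota_{y_a(b)}=\iota_{a+y_a(b)}$, so $d_ad_b=d_{a\cdot b}$; thus $a\mapsto d_a$ turns $(B,\cdot)$ into a group isomorphic to $D$, $\lambda$ is a homomorphism, and $H=\{(a,\lambda_a)\}$ is a regular subgroup of $\Hol(K)$, so by \cite[Theorem~4.2]{GuarnieriVendramin17} this is a brace with additive group $K$. Since $\mu_a=\iota_a\lambda_a=x_a$, we have $\im\lambda=Y$, $\im\mu=X$, and the kernels of $\lambda,\mu$ furnish the $T,V$ of (d).

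The main obstacle lies exactly at the interface of (b) and (c). In the forward direction it is the realisation that $N,M$ must be the images $\mu(V),\lambda(T)$ rather than the intersections with $\Inn(K)$, so that the order bookkeeping in (c) closes; in the converse it is the bijectivity of $\beta$, whose surjectivity is forced by (b) but whose injectivity rests solely on the numerical coincidence $\lvert D\rvert=\lvert K\rvert$ supplied by (c). Everything else — that $H$ is a subgroup and that the distributive law holds — is routine.
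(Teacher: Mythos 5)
Your proposal is correct and takes essentially the same route as the paper: your conjugation homomorphism $c$ is the paper's $\alpha$, your $X=\{\iota_a\lambda_a\}$, $Y=\im\lambda$, $N=\mu(V)$, $M=\lambda(T)$ are exactly the paper's $\alpha(H)$, $\pi(H)$, $\alpha(R)$, $\pi(R)$ with $R=(H\cap K)\Cent_H(K)$, and condition (d) is read off from the same pair of kernels. In the converse, your fibre product $D$, the counting argument for the bijectivity of $(y,x)\mapsto xy^{-1}$, and the transported multiplication $a\cdot b=a+\lambda_a(b)$ reproduce, step for step, the paper's subdirect product $W$, its bijection onto $\Inn(K)$, and the regular subgroup $H\leq\Hol(K)$.
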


\begin{corollary}
\label{cor:drop-split}
Let $K$ be a finite group with trivial centre. Suppose that there exist subgroups $X$, $Y$ of $\Aut(K)$ such that $X \cap Y = 1$ and $XY=X{\Inn(K)}=Y{\Inn(K)}$. Then there exists 
a brace with additive group $K$ and a multiplicative group which is  isomorphic to a subdirect product of $X$ and~$Y$. 
\end{corollary}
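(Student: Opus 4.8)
The plan is to deduce the corollary directly from the converse direction of Theorem~\ref{th-trivz}: I will show that the hypotheses $X\cap Y=1$ and $XY=X{\Inn(K)}=Y{\Inn(K)}$ force conditions (a)--(c), and then extract the subdirect product from conclusion (d). Condition (a) is literally the hypothesis, so the work lies in producing the data of (b) and verifying the order equalities of (c).

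For (b), the natural choice is $N=X\cap\Inn(K)$ and $M=Y\cap\Inn(K)$. Since $\Inn(K)\trianglelefteq\Aut(K)$, these are normal in $X$ and $Y$ respectively, and $P:=XY=X\Inn(K)=Y\Inn(K)$ is a subgroup of $\Aut(K)$. The second isomorphism theorem supplies canonical isomorphisms $Y/M\xrightarrow{\ \cong\ }P/\Inn(K)$ and $X/N\xrightarrow{\ \cong\ }P/\Inn(K)$ given by $yM\mapsto y\Inn(K)$ and $xN\mapsto x\Inn(K)$; composing them yields an isomorphism $\gamma\colon Y/M\to X/N$. By construction $\gamma(yM)=xN$ holds exactly when $x\Inn(K)=y\Inn(K)$, that is, when $xy^{-1}\in\Inn(K)$, which is the required compatibility (and is independent of the chosen coset representatives, since $N,M\subseteq\Inn(K)$). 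For the surjectivity clause, given $z\in\Inn(K)\subseteq P=XY$ and using $Y^{-1}=Y$, I can write $z=xy^{-1}$ with $x\in X$, $y\in Y$; then $xy^{-1}=z\in\Inn(K)$ gives $x\Inn(K)=y\Inn(K)$, hence $\gamma(yM)=xN$, as wanted.

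The counting in (c) is where both remaining hypotheses are consumed. From $X\cap Y=1$ one gets $\lvert P\rvert=\lvert XY\rvert=\lvert X\rvert\lvert Y\rvert$, while $\lvert P\rvert=\lvert X\Inn(K)\rvert=\lvert X\rvert\lvert\Inn(K)\rvert/\lvert N\rvert$ and symmetrically $\lvert P\rvert=\lvert Y\rvert\lvert\Inn(K)\rvert/\lvert M\rvert$. Comparing these gives $\lvert Y\rvert\lvert N\rvert=\lvert X\rvert\lvert M\rvert=\lvert\Inn(K)\rvert$. Because $\centre(K)=1$ we have $\Inn(K)\cong K$, so $\lvert\Inn(K)\rvert=\lvert K\rvert$, and (c) follows. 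With (a)--(c) in hand, the converse part of Theorem~\ref{th-trivz} provides a brace $B$ with $(B,+)=K$ and $C=(B,\cdot)$ admitting normal subgroups $T,V$ with $T\cap V=1$, $C/T\cong X$ and $C/V\cong Y$. Finally, the homomorphism $c\mapsto(cT,cV)$ has kernel $T\cap V=1$, so it embeds $C$ into $C/T\times C/V\cong X\times Y$; since each coordinate projection is onto, the image is a subdirect product of $X$ and $Y$, completing the proof. The only step that needs genuine care is this order count: it is the single place where $X\cap Y=1$ and the triviality of $\centre(K)$ combine, and it is what guarantees that the otherwise-natural choice of $N$ and $M$ actually fits condition (c).
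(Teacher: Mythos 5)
Your proposal is correct and follows essentially the same route as the paper's own proof: the same choices $N=X\cap\Inn(K)$, $M=Y\cap\Inn(K)$, the same isomorphism $\gamma\colon Y/M\to X/N$ obtained through $Y/M\cong X\Inn(K)/\Inn(K)\cong X/N$, and the same order count using $X\cap Y=1$ together with $\lvert\Inn(K)\rvert=\lvert K\rvert$, before invoking the converse direction of Theorem~\ref{th-trivz}. The only cosmetic differences are that you build $\gamma$ from the second isomorphism theorem rather than from uniqueness of the representatives $x$, $y$ (which the paper gets from $X\cap Y=1$), and that you spell out the standard embedding $c\mapsto(cT,cV)$ that the paper leaves implicit.
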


\begin{proof}
Assume that $X\cap Y=1$. Consider $N=X\cap {\Inn(K)}$, $M=Y\cap {\Inn(K)}$. Then $\lvert X\rvert\lvert M\rvert=\lvert K\rvert$
 as $\lvert X\rvert \lvert Y\rvert=\lvert {\Inn(K)}\rvert\lvert Y\rvert/\lvert Y\cap {\Inn(K)}\rvert$. Analogously, $\lvert Y\rvert\lvert N\rvert=\lvert K\rvert$. Moreover, since
\[Y/M\cong Y{\Inn(K)}/{\Inn(K)}=X{\Inn(K)}/{\Inn(K)}\cong X/N,\]
we have an isomorphism $\gamma\colon Y/M\longrightarrow X/N$ given by $\gamma(bM)=aN$, where $b \in Y$, $a \in X$ such that $ab^{-1} \in \Inn(K)$. Observe that $a$ and $b$ are unique as $X \cap Y = 1$. Then, the groups $X$ and $Y$ satisfy Statements~(a)--(c) of Theorem~\ref{th-trivz}, and therefore, there exists a brace whose additive group is $K$ and a multiplicative group isomorphic to a subdirect product of $X$ and~$Y$.  
\end{proof}


Corollary~\ref{cor:drop-split} also allows to give a considerably shorter proof of the main results of \cite{Tsang23-blms,Tsang24-agta} about the almost simple groups $K$ that can appear as additive groups of braces with soluble multiplicative group. By Corollary~\ref{cor:drop-split}, it is enough to find two subgroups $X$ and $Y$ of $\Aut(K)$ such that $X \cap Y = 1$ and $XY=X{\Inn(K)}=Y{\Inn(K)}$. Therefore, Codes~2, 3, and~4 in the proof of~\cite[Theorem 1.3]{Tsang23-blms} can be avoid, as well as checking in every case that the subgroup $X$ splits over $X\cap {\Inn(K)}$. 

    

In Section~\ref{sec:worked-ex} we present a worked example of a construction of a brace with additive group $K = \PSL_2(25)$ by means of subgroups $X$ and  $Y$ of $\Aut(K)$ satisfying all conditions of Theorem~\ref{th-trivz} but $X \cap Y \neq 1$.

\section{Proof of Theorem~\ref{th-trivz}}\label{sec-Tsang}

\begin{proof}[Proof of Theorem~\ref{th-trivz}]
Suppose that $B$ is a brace with additive group $K$ and lambda map~$\lambda$. Let $H=\{(b, \lambda_b)\!\mid b\in B\}$ the regular subgroup of $\Hol(K)$ appearing in the small trifactorised group $\mathsf{S}(B)=(S, K, H, E)$ associated with~$B$. Recall that  $E= \{(0, \lambda_b)\!\mid b \in B\} \leq \Hol(K)$, and $S = KH = KE = HE$ with $K\cap E=H\cap E= 1$. 
  
Observe that $S$ acts on $K$ by means of the homomorphism $\pi \colon (b, \omega)\in S \mapsto \omega \in \Aut(K)$. On the other hand, $S$ also acts on $K$ by conjugation. In fact, this action naturally induces a homomorphism $\alpha \colon S \rightarrow \Aut(K)$. In particular, for every $b\in B$,  $(0,\lambda_b)(k,1)(0,\lambda_b)^{-1}=(\lambda_b(k), 1)$, that is, $\alpha(0,\lambda_b) = \lambda_b = \pi(0,\lambda_b)$. Thus, $\alpha(E) = \pi(E) = \pi(H)$.

  \begin{figure}[h]
    \centering
    \[\xymatrix{&H\ar@{-}[d]\\
      (X)&(H\cap K){\Cent_H(K)}\ar@{-}[dl]\ar@{-}[dr]&(Y)\\
      {\Cent_H(K)}\ar@{-}[dr]&&H\cap K\ar@{-}[dl]\\
      &1}\]
    \caption{Structure of the multiplicative group in Theorem~\ref{th-trivz}}
    \label{fig-subdirect}
  \end{figure}
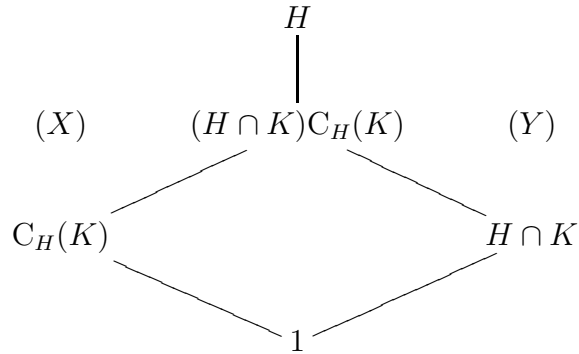

The restrictions of $\pi$ and $\alpha$ to $H$ induce two actions of $H$ on~$K$, with respective kernels $\Ker{\pi}|_H=K\cap H\trianglelefteq H$ and $\Ker{\alpha|_H}=\Cent_H(K)\trianglelefteq H$. Moreover, it holds that
\begin{align*}
& \Ker{\pi}|_H\cap \Ker{\alpha|_H}= K\cap H\cap \Cent_H(K)= K\cap H\cap \Cent_S(K)= \\
& = H\cap \Cent_{K}(K)=H\cap \centre(K)=1 \quad \text{(see Figure~\ref{fig-subdirect}).}
\end{align*}

  Let $X:=\alpha(H)$ and $Y := \pi(H) = \alpha(E) = \{\lambda_b\mid b\in B\}$ such that $X \cong H/{\Cent_H(K)}$ and $Y \cong H/(K\cap H)$. Since $\alpha(K)=\Inn(K)$, we have that
  \begin{align*}
    \alpha(G)&=\alpha(HE)=\alpha(KH)=\alpha(KE)\\
             &=\alpha(H)\alpha(E)=\alpha(K)\alpha(H)=\alpha(K)\alpha(E)\\
             &=XY=(\Inn(K))X=(\Inn(K))Y.
  \end{align*}

Take $R:=(H\cap K){\Cent_H(K)} \unlhd H$. Then, $N:=\alpha(R)\trianglelefteq \alpha(H)=X$ and $ M:= \pi(R)\trianglelefteq \pi(H) = Y$. It follows that $N =\alpha(H\cap K)\le \alpha(K)=\Inn(K)$. On the other hand, $M = \pi(\Cent_H(K))$ and if $(b,\lambda_b)\in{\Cent_H(K)}$, then for every $k\in K$,
\[
    (b, \lambda_b)(k,1)(b,\lambda_b)^{-1}=(b+\lambda_b(k)-b,1)=(k,1),
  \]
  that is, $\lambda_b$ coincides with the inner automorphism of $K$ induced by~$-b$. Thus, $M \le \Inn(K)$. Moreover, we see that
  \begin{align*} Y/M &\cong (H/\Ker{\pi}|_H)/(R/\Ker{\pi}|_H)\cong H/R\\
    &\cong(H/\Ker\alpha|_H)/(R/\Ker\alpha|_H)\cong X/N;
  \end{align*}
  here the isomorphism $\gamma\colon Y/M\longrightarrow X/N$ is given by $\gamma(\lambda_b M)=\alpha_b\lambda_b N$, where $\alpha_b$ is the inner automorphism of $K$ induced by~$b$. Given $a\in \gamma(\lambda_bM)$, we have that $a\lambda_b^{-1}\in\alpha_bN\subseteq \Inn(K)$. Furthermore, given $x\in \Inn(K)$, we have that $x=\alpha_b$ for some $b\in B$ and so $\gamma(\lambda_b M)=\alpha_b\lambda_b N=x\lambda_b N$ with $(\alpha_b\lambda_b)\lambda_b^{-1}=x$.

Since $\Ker{\pi|}_H\cap {\Ker\alpha|_H}=(H\cap K)\cap \Cent_H(K)=1$, we have that $\lvert R\rvert={\lvert H\cap K\rvert}\lvert {\Cent_H(K)}\rvert$ and $\lvert N\rvert=\lvert R/(H\cap K)\rvert=\lvert \Cent_H(K)\rvert$, $\lvert M\rvert=\lvert R/{\Cent_H(K)}\rvert=\lvert H\cap K\rvert$. As $\lvert X\rvert=\lvert K\rvert/\lvert H\cap K\rvert$ and $\lvert Y\rvert=\lvert K\rvert/\lvert{\Cent_H(K)}\rvert$, the claim about the order follows.

\bigskip

  
Now, suppose that $\Aut(K)$ possesses subgroups $X$ and $Y$ satisfying conditions (a)--(c). Let
  \[W=\{(x,y)\mid x\in X,\, y\in Y,\, \gamma(yM)=xN\}\]
be a subdirect product of $X$ and $Y$ with amalgamated factor group $Y/M\cong X/N$ (see \cite[Chapter~A, Definition~19.2]{DoerkHawkes92}). By \cite[Chapter~A, Proposition~19.1]{DoerkHawkes92}, and the hypothesis, we have that $\lvert W\rvert=\lvert K\rvert$. Since $\centre(K)$ is trivial, the map $\zeta\colon K\longrightarrow \Inn(K)$, where $\zeta(k)$ is the inner automorphism of $K$ induced by~$k$, is an isomorphism. By hypothesis, the map $W\longrightarrow \Inn(K)$ given by $(x,y)\longmapsto xy^{-1}$ is surjective. Since $\lvert W\rvert=\lvert\Inn(K)\rvert=\lvert K\rvert$, it is a bijection. We can consider $H=\{(b,y)\mid (x,y)\in W,\, \zeta(b)=xy^{-1}\}\subseteq\Hol(K)$. Given $(b, y)$, $(b_1, y_1)\in H$, we have that $(b, y)(b_1,y_1)=(b+y(b_1),yy_1)$, $\zeta(b)=xy^{-1}$, and $\zeta(b_1)=x_1y_1^{-1}$ with $(x,y)$, $(x_1, y_1)\in B$. Then
\[\zeta(b+y(b_1))=\zeta(b)\zeta(y(b_1))=\zeta(b)y\zeta(b_1)y^{-1}=xy^{-1}yx_1y_1^{-1}y^{-1}=(xx_1)(yy_1)^{-1}\]
with $(xx_1, yy_1)=(x,y)(x_1,y_1)\in W$. Furthermore, if $(b, y)\in H$, with $\zeta(b)=xy^{-1}$, we have that $(b, y)^{-1}=(y^{-1}(-b), y^{-1})$ and
\[\zeta(y^{-1}(-b))=y^{-1}\zeta(-b)y=y^{-1}\zeta(b)^{-1}y=y^{-1}yx^{-1}y=x^{-1}{(y^{-1})}^{-1}\]
with $(x^{-1}, y^{-1})=(x,y)^{-1}\in W$. We conclude that $H$ is a subgroup of $\Hol(K)$. As the projection onto its first component is surjective, it turns out that it $H$ is a regular subgroup of $\Hol(G)$ by \cite[Proposition~2.5]{BallesterEstebanPerezC24-braces64-55} and so it is isomorphic to the multiplicative group of a brace with additive group~$K$ (see \cite[Theorem 4.2]{GuarnieriVendramin17}).

We finish the proof by showing that the map $\gamma\colon H\longmapsto W$ given by $(b, y)\longmapsto (\zeta(b)y, y)$, where $\zeta(b)=xy^{-1}$ and $(x, y)\in W$, is an isomorphism. Indeed, if $\zeta(b)=xy^{-1}$, $\zeta(b_1)=x_1y_1^{-1}$, where $(x,y)$, $(x_1,y_1)\in W$, we have that
\begin{align*}
    \gamma(b,y)\gamma(b_1,y_1)&=(\zeta(b)y,y)(\zeta(b_1)y_1,y_1)=(x,y)(x_1,y_1)=(xx_1,yy_1),\\
    \gamma((b,y)(b_1,y_1))&=\gamma(b+y(b_1),yy_1)=(\zeta(b+y(b_1))yy_1,yy_1)\\
                              &=(\zeta(b)y\zeta(b_1)y^{-1}yy_1,yy_1)=(xy^{-1}yx_1y_1^{-1}y_1,yy_1)\\
                              &=(xx_1,yy_1).                               
  \end{align*}
We conclude that $\gamma$ is a group homomorphism. Assume that $\gamma(b, y)=(\zeta(b)y,y)=(1,1)$, with $\zeta(b)=xy^{-1}$ and $(x,y)\in H$, then $y=1$ and so $\zeta(b)=x=1$, what implies that $b=0$. Consequently, $\gamma$ is injective. As $W$ and $H$  are finite and have the same order, we obtain that $\gamma$ is an isomorphism.
\end{proof}

\section{A worked example}
\label{sec:worked-ex}

In general, we do not have that $X\cap Y=1$.  Let us consider $K=\PSL_2(25)$. Its automorphism group $A=\Aut(K)$ is generated by $\Inn(K)$, the diagonal automorphism $d$ induced by the conjugation by the matrix
  \[\mathsf{D}=
    \begin{bmatrix}
      \zeta&0\\
      0&1
    \end{bmatrix}\in\GL_2(25),\]
  where $\zeta$ is a primitive $24$th-root of unity of $\GF(25)$, and the field automorphism $f$. The group $A$ possesses a subgroup $X$ generated by the inner automorphisms $c_1$,  $c_2$, and $c_3$ induced by the matrices
  \[\mathsf{C}_1=
    \begin{bmatrix}
      \zeta^{11}&0\\
      \zeta^{14}&\zeta^{13}
    \end{bmatrix},\qquad\mathsf{C}_2=
    \begin{bmatrix}
      1&0\\
      \zeta&1
    \end{bmatrix},\qquad \mathsf{C}_3=
    \begin{bmatrix}
      1&0\\
      1&1
    \end{bmatrix}
  \]
  respectively, and, if $c_0$ is the inner automorphism of $K$ induced by
  \[\mathsf{C}_0=
    \begin{bmatrix}
      \zeta^{7}&0\\
      \zeta^{17}&\zeta^{17}
    \end{bmatrix},
  \]
  the element $c_0fd$. We have that $c_1$ has order $12$, $\langle c_2, c_3\rangle$ is an elementary abelian group of order~$25$, and $(c_0fd)^2$ is induced by the matrix $\mathsf{R}^2\mathsf{C}_1^3$ of  order~$8$, with
  \[\mathsf{R}=
    \begin{bmatrix}
      \zeta&0\\
      0&\zeta
    \end{bmatrix}\in\centre(\GL_2(25)).
  \]
  The group $\langle c_1, c_2, c_3,  c_0fd\rangle$ has order~$600$.

  It also possesses a subgroup $Y$ generated by the inner automorphisms $u_1$ and $u_2$ induced by the conjugation by
  \[\mathsf{U_1}=\begin{bmatrix}
      3&\zeta^{21}\\
      \zeta^{14}&\zeta^{22}
    \end{bmatrix},\qquad
    \mathsf{U_2}=
    \begin{bmatrix}
      3&2\\
      0&2
    \end{bmatrix},
  \]
  respectively, and $u_0fd$, where $u_0$ is the inner automorphism induced by the conjugation by
  \[\mathsf{U_0}=
    \begin{bmatrix}
      \zeta&4\\
      \zeta^7&\zeta^8
    \end{bmatrix}.
  \]
  Then $(u_0fd)^2$ is induced by $\mathsf{R}^{12}\mathsf{U}_2$, that is, $(u_0fd)^2=u_2$, $(u_0fd)^2$ has order~$2$, $u_1$ has order~$13$, and $\langle u_1, (u_0fd)^2\rangle$ has order~$52$. By \cite{Atlas85}, $X$ and $Y$ are maximal subgroups of the almost simple group $\Inn(K)\langle fd\rangle$.

  We note that $\Inn(K)X=\Inn(K)Y=\Inn(K)\langle fd\rangle$. Moreover, $\lvert {X\cap Y}\rvert$ divides $\gcd (\lvert X\rvert, \lvert Y\rvert)=4$ and, since $X$ and $Y$ have cyclic Sylow $2$-subgroups, $X\cap Y$ must be cyclic and so $\lvert X\cap Y\rvert \le 4$. But if $\lvert X\cap Y\rvert = 4$, then $X\cap Y$ is contained in $X\cap \Inn(K)$, but it is not contained in $Y\cap \Inn(K)$. This shows that $\lvert X\cap Y\rvert \le 2$. 
  As $XY\subseteq \Inn(K)\langle fd\rangle$,
  \[15\,600=\lvert \Inn(K)\langle fd\rangle\rvert\ge \lvert XY\rvert=\frac{\lvert X\rvert \lvert Y\rvert}{\lvert X\cap Y\rvert}=15\,600\cdot \frac{2}{\lvert X\cap Y\rvert}\ge 15\,600,\]
  and so $XY=\Inn(K)\langle fd\rangle$ and $\lvert X\cap Y\rvert=2$. In fact, the inner automorphism $t$ induced by the matrix
  \[\mathsf{T}=
    \begin{bmatrix}
      2&\zeta^8\\
      0&3
    \end{bmatrix}
  \]
  belongs to $X\cap Y$, as $\mathsf{T}=\mathsf{C}_1^{6}\mathsf{C}_2\mathsf{C}_3$ induces by conjugation a non-trivial automorphism of~$X$ and $\mathsf{T}=\mathsf{U}_1^3\mathsf{U}_2^3$
  induces a non-trivial automorphism by conjugation on $\SL_2(25)$ that belongs to~$Y$.

  Let $N=\langle c_2, c_3,c_1^2\rangle\trianglelefteq X$, $M=\langle u_1\rangle\trianglelefteq Y$. Then $\lvert N\rvert=150$, $\lvert M\rvert=13$, $N\le X\cap \Inn(K)$, $M\le Y\cap \Inn(K)$,  $Y/M\cong X/N\cong C_4$, and $\lvert K\rvert=\lvert X\rvert \lvert M\rvert = \lvert Y\rvert \lvert N\rvert$. The isomorphism between $Y/M$ and $X/N$ is given by $\gamma((u_0fd)^rM)=(c_0fd)^rN$ for $0\le r\le 4$, and it is clear that
  \begin{gather*}
    (c_0fd)(u_0fd)^{-1}=c_0u_0^{-1}\in \Inn(K),\\
    (c_0fd)^2(u_0fd)^{-2}\in \Inn(K),\\ (c_0fd)^3(u_0fd)^{-3}=(c_0fd)^2(c_0u_0^{-1})(u_0fd)^{-2}\in\Inn(K). 
  \end{gather*}

  Let $z\in XY\cap \Inn(K)$. Then there exist $x\in X$, $y\in Y$ with $z=xy^{-1}=(xt)(yt)^{-1}$, where $\langle t\rangle=X\cap Y$. We observe that $t\in N$, as $\mathsf{T}=\mathsf{C}_1^{6}\mathsf{C}_2\mathsf{C}_3$, but $t\notin M$ by order considerations. Given $x\in X$, $y\in Y$, there exist $r$, $s\in\{0,1,2,3\}$ such that $xN=(c_0fd)^rN$ and $yM=(u_0fd)^sM$. We also observe that $x\in \Inn(K)$ if, and only if, $y\in \Inn(K)$. To prove that we can choose $x\in X$, $y\in Y$ such that $z=xy^{-1}$ and  $\gamma(yM)=xN$, it is enough to prove that for such a choice we have that $z=xy^{-1}$ and $r=s$. Note that if $x\in N$, then $r=0$; if $x\in \Inn(K)\setminus N$, then $r=2$; and if $x\notin\Inn(K)$, then $r\in\{1,3\}$. Analogously, if $y\in M$, then $s=0$; if $y\in\Inn(K)\setminus M$, then $s=2$; and if $y\notin \Inn(K)$, then $s\in \{1,3\}$. We also have that $tM=(u_0fd)^2M$ and that $tN=N$, as $t\in \Inn(K)$, $t\in N$, but $t\notin M$. If $x\in N$ and $y\in M$, we can choose $r=s=0$ and $\gamma(yM)=xN$. Suppose that $x\in N$ and $y\notin M$. Then $y\in\Inn(K)$ and so, $xN=N$ and $yM=(u_0fd)^2M$. Consequently, $xtN=N$, $ytN=N$, and $\gamma(ytN)=xtN$. Suppose that $x\notin N$ and $y\in M$. We have that $x\in \Inn(K)$ and so, $xN=(c_0fd)^2N$ and $yM=M$. It follows that $xtN=(c_0fd)^2N$ and $ytM=(u_0fd)^2M$, that is, $\gamma(ytM)=xtN$. Suppose that $x$, $y\in \Inn(K)$, $x\notin N$, and $y\notin M$. Then $xN=(c_0fd)^2N$, $yM=(u_0fd)^2M$, and $\gamma(yM)=xN$. Finally, suppose that $x$ and  $y\notin M$. Then $xN=(c_0fd)^rN$ and $yM=(u_0fd)^sM$, with $r$, $s\in\{1,3\}$. If $r=s$, then $\gamma(yM)=xN$. If $r\ne s$, then $xtN=(c_0fd)^rN$ and $ytM=(u_0fd)^{s+2}M$, with $r\equiv s+2\pmod{4}$. Thus $\gamma(ytM)=xtN$.

  It follows that $X$, $Y$ satisfy all conditions of Theorem~\ref{th-trivz}.
  We can also check with \textsf{GAP} \cite{GAP4-12-2} all this information about these subgroups.

 \providecommand{\iflanguage}[3]{#3}\relax \providecommand{\mathfrak}{\mathcal}
  \providecommand{\acceptedin}{\iflanguage{spanish}{Aceptado en}{Accepted in}}
  \providecommand{\accepted}{\iflanguage{spanish}{Aceptado}{Accepted}}
  \providecommand{\talkat}{\iflanguage{spanish}{Charla en}{Talk at}}
  \providecommand{\plenarytalkat}{\iflanguage{spanish}{Charla plenaria
  en}{Plenary talk at}} \providecommand{\JIF}{JIF}
  \providecommand{\PhDcourseat}{\iflanguage{spanish}{Curso de doctorado en}{PhD
  course at}} \providecommand{\transrus}{\iflanguage{spanish}{Traducci{\'o}n
  del art{\'\i}culo original en ruso en}{Translation of the original paper in
  Russian from}}
  \providecommand{\supervisedby}{\iflanguage{spanish}{Direcci{\'o}n:}{Supervised
  by}} \providecommand{\visited}{\iflanguage{spanish}{visitada}{visited}}
  \providecommand{\inpress}{\iflanguage{spanish}{en
  prensa}{\iflanguage{catalan}{en premsa}{in press}}}
  \providecommand{\encurs}{\iflanguage{spanish}{en
  curso}{\iflanguage{catalan}{en curs}{in course}}}
  \providecommand{\aand}{\iflanguage{spanish}{y}{and}}
  \providecommand{\invitedtalkat}{\iflanguage{spanish}{Conferencia invitada
  en}{Invited talk at}} \providecommand{\posterat}{\iflanguage{spanish}{Póster
  en}{Poster at}}
  \providecommand{\invitedtalk}{\iflanguage{spanish}{Conferencia
  invitada}{Invited talk}}
  \providecommand{\oral}{\iflanguage{spanish}{Comunicaci{\'o}n oral}{Oral
  communication}}
  \providecommand{\Poland}{\iflanguage{spanish}{Polonia}{Poland}}
  \providecommand{\Germany}{\iflanguage{spanish}{Alemania}{Germany}}
  \providecommand{\Italy}{\iflanguage{spanish}{Italia}{Italy}}
  \providecommand{\Ireland}{\iflanguage{spanish}{Irlanda}{Ireland}}
  \providecommand{\Hungary}{\iflanguage{spanish}{Hungr{\'\i}a}{Hungary}}
  \providecommand{\Portugal}{\iflanguage{spanish}{Portugal}{Portugal}}
  \providecommand{\Cairo}{\iflanguage{spanish}{El Cairo, Egipto}{Cairo, Egypt}}
  \providecommand{\Bath}{\iflanguage{spanish}{Bath, Inglaterra, Reino
  Unido}{Bath, England, United Kingdom}}
  \providecommand{\presentada}[2]{\iflanguage{spanish}{presentada por
  #1}{presented by #2}}
  \providecommand{\Ponenciaplenaria}{\iflanguage{spanish}{Ponencia
  plenaria}{Plenary talk}}
  \providecommand{\Naples}{\iflanguage{spanish}{N{\'a}poles}{Naples}}
  \def\cprime{$'$} \providecommand{\germ}{\mathfrak}
  \providecommand{\url}{\texttt}

\end{document}